%
%
%
%
\documentclass[11pt]{amsart}
\usepackage[colorlinks,citecolor=red, dvipdfm]{hyperref}

\setlength{\oddsidemargin}{0cm} \setlength{\evensidemargin}{0in}
\setlength{\textwidth}{15.6cm} \setlength{\textheight}{23cm}
\setlength{\headheight}{0cm} \setlength{\topmargin}{-1cm}
\setlength{\parskip}{2mm}
\newtheorem{theorem}{Theorem}[section]
\newtheorem{lemma}[theorem]{Lemma}

\theoremstyle{definition}

\newtheorem{corollary}[theorem]{Corollary}

\theoremstyle{remark}

\newcommand{\bs}{\begin{split}}
\newcommand{\es}{\begin{split}}

\newcommand{\be}{\begin{equation}}
\newcommand{\ee}{\end{equation}}

\numberwithin{equation}{section}



\begin{document}

\title[A characterization of the $\hat{A}$-genus]
{A characterization of the $\hat{A}$-genus as a linear combination
of Pontrjagin numbers}

\author{Ping Li}
\address{Department of Mathematics, Tongji University, Shanghai 200092, China}
\email{pingli@tongji.edu.cn}
\thanks{The author was partially supported by the National
Natural Science Foundation of China (Grant No. 11471247) and the
Fundamental Research Funds for the Central Universities.}

 \subjclass[2010]{53C21, 57R20, 53C23, 57R75.}


\keywords{$\hat{A}$-genus, spin manifold, Ricci curvature, positive
scalar curvature, Pontrjagin number.}

\begin{abstract}
We show in this short note that if a rational linear combination of
Pontrjagin numbers vanishes on all simply-connected $4k$-dimensional
closed connected and oriented spin manifolds admitting a Riemannian
metric whose Ricci curvature is nonnegative and nonzero at any
point, then this linear combination must be a multiple of the
$\hat{A}$-genus, which improves on a result of Gromov and Lawson.
Our proof combines an idea of Atiyah and Hirzebruch and the
celebrated Calabi-Yau theorem.
\end{abstract}

\maketitle
\section{Introduction and main observation}
Unless otherwise stated, all manifolds considered in this note are
smooth, closed, connected and oriented. All metrics mentioned in
this note refer to Riemannian metrics.

Atiyah and Hirzebruch proved in \cite{AH} that if a $4k$-dimensional
spin manifold $M$ admits a non-trivial (smooth) circle action
($S^1$-action), then the $\hat{A}$-genus of $M$ must vanish. Indeed,
they showed via the Atiyah-Bott-Singer fixed point formula that the
equivariant index $\text{spin}(g,M)\in\text{R}(S^1)$ of the Dirac
operator on $M$ vanishes identically. In particular, the
$\hat{A}$-genus $\hat{A}(M)=\text{spin}(1,M)$ must vanish. Note that
the $\hat{A}$-genus is a rational linear combination of Pontrjagin
numbers and $\hat{A}(\cdot)$ can be viewed as a ring homomorphism
$\hat{A}(\cdot):~\Omega^{\text{SO}}_{\ast}\otimes\mathbb{Q}\rightarrow\mathbb{Q}$,
where $\Omega^{\text{SO}}_{\ast}$ is the oriented cobordism ring
(\cite{Hi}). In addition to the above-mentioned main result, by
suitably choosing generators of
$\Omega^{\text{SO}}_{\ast}\otimes\mathbb{Q}$ and a clever
manipulation for them, Atiyah and Hirzebruch also proved in
\cite{AH} that the $\hat{A}$-genus can be characterized as the only
linear combination of Pontrjagin numbers that vanishes on all
$4k$-dimensional spin manifolds admitting a non-trivial circle
action. To be more precise, they showed that (\cite[\S 2.3]{AH}) a
rational linear combination of Pontrjagin numbers vanishes on all
spin manifolds equipped with a non-trivial smooth circle action if
and only if it is a multiple of the $\hat{A}$-genus.

Recall that on $4k$-dimensional spin manifolds the vanishing of the
$\hat{A}$-genus is also an obstruction to the existence of a
positive scalar curvature metric, which is a classical result of
Lichnerowicz (\cite{Li}) and whose argument is based on a
Bochner-type formula related to the Dirac operator and the
Atiyah-Singer index theorem (a detailed and excellent proof can be
found in \cite[Ch.5]{Wu}). Another breakthrough related to the
existence of a positive scalar curvature metric on spin manifolds
came from Gromov and Lawson. They proved in \cite[Theorem B]{GL}
that if a simply-connected spin manifold $X$ is spin cobordant to a
manifold equipped with a positive scalar curvature metric, then $X$
itself also carries
 such a metric. As an application of this result, Gromov-Lawson obtained the same type result as that of
 Atiyah-Hirzebruch in the language of cobordism theory (\cite[Corollary B]{GL}):
 a rational linear combination of Pontrjagin numbers vanishes on all $4k$-dimensional spin manifolds carrying
 a positive scalar curvature metric if and only if it is a multiple of the
 $\hat{A}$-genus.

 Another result which has a similar flavor to the above Atiyah-Hirzebruch and Gromov-Lawson's results
 should also be mentioned. In \cite{Ko}, Kotschick showed that a rational linear combination of Pontrjagin numbers
  which is bounded on all $4k$-dimensional manifolds with a nonnegative sectional curvature metric
  if and only if it is a multiple of the signature. After \cite[Corollary 2]{Ko}, the author commented that his result
  bears some resemblance to Gromov-Lawson's above result.

  However, although Atiyah-Hirzebruch's result is much earlier, clearly neither \cite{GL} nor \cite{Ko} was aware of
  it as they didn't cite \cite{AH}. In spirit, both Gromov-Lawson and Kotschick's proofs
  are similar to that of Atiyah-Hirzebruch (see \cite[p. 432]{GL} and \cite[p. 139]{Ko}):
  suitably choose generators for $\Omega^{\text{SO}}_{\ast}\otimes\mathbb{Q}$ sastisfying certain
  restrictions and then apply a proportionality argument to lead to the desired
  proof. This idea was further taken up by Kotschick et al to solve a
  long-standing problem posed by Hirzebruch in 1954
  (\cite{Ko2}, \cite{KS}).

The \emph{main purpose} of this short note is to show that, by using
the generators of $\Omega^{\text{SO}}_{\ast}\otimes\mathbb{Q}$
chosen in \cite{AH}, together with the celebrated Calabi-Yau
theorem, we can give a direct proof of Gromov-Lawson's this result,
which avoids using their main result \cite[Theorem]{GL}. Moreover,
because of the use of the Calabi-Yau theorem, our statement is
indeed more sharper and thus improves on their original result. We
now state our main observation in this note, whose proof will be
given in Section \ref{section2}.

Note that the Ricci curvature of a metric on a manifold is an
assignment of a quadratic form at the tangent space of any point of
this manifold. The Ricci curvature is said to be \emph{nonnegative
and nonzero at any point} if, at any point of this manifold, this
quadratic form is nonnegative definite but nonzero. With this notion
understood, our main observation is the following

\begin{theorem}\label{maintheorem}
If a rational linear combination of Pontrjagin numbers vanishes on
all $4k$-dimensional simply-connected spin manifolds equipped with a
metric whose Ricci curvature is nonnegative and nonzero at any
point, then it must be a multiple of the $\hat{A}$-genus.
\end{theorem}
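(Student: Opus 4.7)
The plan is to follow the proportionality argument of Atiyah--Hirzebruch in \cite{AH}, replacing their circle-action hypothesis with the Riemannian one. View $L$ as a linear functional on $\Omega^{\text{SO}}_{4k}\otimes\mathbb{Q}$; we must show that its annihilator equals $\mathbb{Q}\cdot\hat{A}$. First, $\hat{A}$ itself lies in this annihilator: if $(M,g)$ is a simply-connected spin manifold with $\text{Ric}_g\geq 0$ and nonzero at every point, then at every point the trace of the nonnegative semidefinite quadratic form $\text{Ric}_g$ is strictly positive, so $g$ has strictly positive scalar curvature, and Lichnerowicz's theorem yields $\hat{A}(M)=0$.

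To obtain the reverse inclusion, the strategy is to exhibit, in each degree $4k$, a $\mathbb{Q}$-basis of $\Omega^{\text{SO}}_{4k}\otimes\mathbb{Q}$ consisting of $p(k)-1$ ``hyperplane'' generators represented by simply-connected spin manifolds satisfying the Ricci hypothesis, together with one additional spin representative transverse to $\hat{A}=0$. Following the generator choice in \cite{AH}, the hyperplane generators may be taken as products of the form $K3^{j}\times\mathbb{HP}^{a_1}\times\cdots\times\mathbb{HP}^{a_r}$ with at least one $\mathbb{HP}$-factor. The Calabi--Yau theorem supplies a Ricci-flat K\"ahler metric on each $K3$-factor, while each $\mathbb{HP}^{a_i}$ carries its standard symmetric-space metric of strictly positive Ricci curvature; the resulting product metric has $\text{Ric}\geq 0$ and is nonzero at every point thanks to the $\mathbb{HP}$-directions. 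The remaining generator is taken to be a pure Calabi--Yau manifold of complex dimension $2k$ with nonvanishing $\hat{A}$-genus (e.g.\ $K3^{k}$ for even $k$, or a suitable complete-intersection Calabi--Yau otherwise); this generator is not required to satisfy the metric hypothesis.

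Once the basis is in place, the theorem follows by the standard proportionality argument of \cite{AH}: $L$ vanishes on the $p(k)-1$ hyperplane generators, so after subtracting the appropriate rational multiple of $\hat{A}$ to absorb $L$'s value on the transverse generator, the resulting combination vanishes on a full basis and must be identically zero. I expect the main obstacle to be verifying that the proposed spin products indeed form a $\mathbb{Q}$-basis of $\Omega^{\text{SO}}_{4k}\otimes\mathbb{Q}$, equivalently that the associated Pontrjagin-number matrix is nondegenerate in each dimension; this parallels the combinatorial core of \cite{AH}, and the role of the Calabi--Yau theorem in our setting is precisely to let us realize the $\hat{A}=0$ cobordism classes by spin manifolds satisfying the required Ricci bound rather than by manifolds admitting an $S^1$-action.
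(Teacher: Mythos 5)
Your proposal is essentially the paper's own proof: it uses Atiyah--Hirzebruch's generating sequence $K3, \mathbb{HP}^2, \mathbb{HP}^3, \ldots$ for $\Omega^{\mathrm{SO}}_{*}\otimes\mathbb{Q}$, realizes the Ricci hypothesis on every monomial basis element of $\Omega^{\mathrm{SO}}_{4k}\otimes\mathbb{Q}$ except $K3^k$ by taking Yau's Ricci-flat K\"ahler metric on the $K3$-factors and the positive-Ricci symmetric metric on the $\mathbb{HP}$-factors, and then concludes by the same proportionality argument. The only small correction is that $K3^k$ already serves as the transverse generator for \emph{every} $k$ (it has real dimension $4k$ and $\hat{A}(K3^k)=2^k\neq 0$), so the even/odd case distinction and the appeal to alternative complete-intersection Calabi--Yaus are unnecessary.
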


Clearly the conclusion in Theorem \ref{maintheorem} remains true if
we weaken the restrictions imposed on the manifolds in Theorem
\ref{maintheorem}. Recall that the scalar curvature is nothing but
the trace of the real symmetric matrix which represents the
quadratic form of the Ricci curvature under some orthonormal basis.
So the condition that the Ricci curvature be nonnegative and nonzero
at any point means that the eigenvalues of this matrix are all
nonnegative and at least one of them is positive. In particular the
trace is positive. This means the condition of the Ricci curvature
required in Theorem \ref{maintheorem} implies that the scalar
curvature of this metric is positive. Therefore, Theorem
\ref{maintheorem}, together with the Lichnerowicz vanishing theorem,
yields the following corollary.

\begin{corollary}(Gromov-Lawson, \cite[Corollary B]{AH})\label{maincoro}
A rational linear combination of Pontrjagin numbers vanishes on all
$4k$-dimensional spin manifolds equipped with a positive scalar
curvature metric if and only if it is a multiple of the
$\hat{A}$-genus.
\end{corollary}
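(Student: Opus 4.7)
The plan is to view $L$ and the $\hat{A}$-genus as $\mathbb{Q}$-linear functionals on $\Omega^{\text{SO}}_{4k}\otimes\mathbb{Q}$ and prove they are proportional by exhibiting a codimension-one subspace on which both vanish. By hypothesis $L$ annihilates the cobordism class $[M]$ of every $M$ in our class $\mathcal{C}$. Since the Ricci quadratic form on such an $M$ is nonnegative with at least one strictly positive eigenvalue at each point, its trace---the scalar curvature---is strictly positive, and so by the Lichnerowicz theorem $\hat{A}(M)=0$ as well. Thus both $L$ and $\hat{A}$ annihilate $V:=\mathrm{span}_{\mathbb{Q}}\{[M]:M\in\mathcal{C}\}\subseteq\Omega^{\text{SO}}_{4k}\otimes\mathbb{Q}$, and Theorem~\ref{maintheorem} reduces to showing $\dim_{\mathbb{Q}}V\geq p(k)-1$, where $p(k)$ is the rank of $\Omega^{\text{SO}}_{4k}\otimes\mathbb{Q}$: this forces $V=\ker\hat{A}$ and hence $L\in\mathbb{Q}\cdot\hat{A}$.

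To produce these $p(k)-1$ cobordism classes in $\mathcal{C}$, I would reuse verbatim the spin generators of $\ker\hat{A}\subset\Omega^{\text{SO}}_{4k}\otimes\mathbb{Q}$ constructed by Atiyah and Hirzebruch in \cite[\S 2.3]{AH}: there, $p(k)-1$ linearly independent basis vectors in $\ker\hat{A}$ were realized by simply-connected spin manifolds carrying nontrivial circle actions (and hence having vanishing $\hat{A}$), built as products whose factors are either quaternionic projective spaces $\mathbb{HP}^i$ or smooth projective Kähler manifolds. For each such generator I would equip the quaternionic factors with their canonical symmetric metric (whose Ricci is strictly positive), and equip the Kähler factors using the Calabi-Yau theorem (Yau's solution of the Calabi conjecture), which allows one to prescribe any smooth real $(1,1)$-form representing $2\pi c_1$ as the Ricci form of a Kähler metric: for a Fano factor one selects a strictly positive representative, yielding a Kähler metric of strictly positive Ricci; for a Calabi-Yau factor ($c_1=0$) one obtains a Ricci-flat Kähler metric. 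The resulting product metric on the generator then has Ricci nonnegative at every point, and strictly positive in the direction of any factor whose own Ricci is strictly positive.

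The main difficulty, in my view, will be guaranteeing that each of AH's generators contains at least one factor of strictly positive Ricci, so that the product metric is not merely pointwise nonnegative but pointwise \emph{nonzero}. Should some AH-generator happen to consist entirely of Ricci-flat Kähler (Calabi-Yau) factors, the product metric would have identically vanishing Ricci and fall outside our hypothesis; in that case I would modify the choice within $\ker\hat{A}$, exploiting the freedom of working only up to $\mathbb{Q}$-linear independence, by multiplying in or substituting a positive-Ricci simply-connected spin manifold (for instance another $\mathbb{HP}^k$ or a compact Hermitian symmetric Fano space such as the complex quadric $Q^{2k}$, which carries a Kähler-Einstein metric of strictly positive Ricci). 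The abundance of such manifolds in every dimension $4k$ should make this rearrangement routine while preserving the spanning property. Once these verifications are complete, $V=\ker\hat{A}$ and Theorem~\ref{maintheorem} follows by the above proportionality argument.
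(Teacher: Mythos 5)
Your approach is in substance the paper's: deduce the corollary from Theorem \ref{maintheorem} (whose proof your middle paragraphs essentially reconstruct) together with the observation that nonnegative-nonzero Ricci implies positive scalar curvature, which gives both $L|_{\mathcal C}=0$ (from the corollary's hypothesis) and $\hat{A}|_{\mathcal C}=0$ (from Lichnerowicz). Two corrections are worth recording. First, in your opening paragraph the phrase ``by hypothesis'' is premature: a manifold $M\in\mathcal{C}$ only falls under the corollary's hypothesis \emph{via} the trace/positive-scalar-curvature observation, which you state a sentence too late and apply only to justify $\hat{A}(M)=0$; it is needed equally to justify $L(M)=0$.

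Second, and more usefully, the ``main difficulty'' you anticipate does not in fact arise with the Atiyah--Hirzebruch basis, so the ad hoc rearrangement you sketch is unnecessary (and, as written, too vague: multiplying in a factor changes the dimension, while substituting within $\ker\hat{A}$ requires an argument that linear independence is preserved). The Atiyah--Hirzebruch generators used in the paper are $N^1=V_2(4)$ (a K3 surface, the \emph{only} factor that is K\"ahler and Ricci-flat) and $N^k=\mathbb{H}P^k$ for $k\geq 2$ (which are not K\"ahler at all --- they are genuinely quaternionic and carry symmetric metrics of strictly positive Ricci; there are no Fano factors to treat). The only monomial basis element $\prod N^{k_i}$ built exclusively from Ricci-flat factors is $(N^1)^k$, and this one has $\hat{A}\big((N^1)^k\big)=2^k\neq 0$, so it is automatically the single element excluded from $\ker\hat{A}$. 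Every one of the remaining $p(k)-1$ monomials contains at least one $\mathbb{H}P^{k_i}$ factor with $k_i\geq 2$, and by part (4)/(5) of Lemma \ref{lemma} the product metric is pointwise nonnegative and nonzero. The Calabi--Yau theorem therefore enters exactly once --- to Ricci-flatten the $N^1$ factors appearing in those mixed products --- and no further modification of the generator set is needed.
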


\section{Proof of Theorem \ref{maintheorem}}\label{section2}
Let $\mathbb{H}P^k$ denote the quaternionic projective space whose
real dimension is $4k$. It is well-known that $\mathbb{H}P^k$ admits
a metric with positive sectional curvature and thus it has positive
Ricci curvature. Let $V_2(4)$ be the smooth hypersurface of degree
$4$ in the $3$-dimensional complex projective space $\mathbb{C}P^3$,
which is a compact K\"{a}hler surface and whose first Chern class
$c_1(V_2(4))=0$ in $H^2(V_2(4);\mathbb{Z})$ (Kummer surface). For
simplicity we denote by $N^1:=V_2(4)$ and $N^k:=\mathbb{H}P^k$
($k\geq 2$).

For our later use, we record some facts related to $N^k$ in the
following lemma. Although some of them have been sketchily explained
in \cite[\S 2.3]{AH}, for the reader's convenience, we would like to
either point out a reference or indicate its proof if some
non-standard fact is stated/recorded.

\begin{lemma}\label{lemma}
~\begin{enumerate}
\item
$$\Omega^{\text{SO}}_{\ast}\otimes\mathbb{Q}=
\mathbb{Q}[N^1,N^2,N^3,\ldots,].$$ In other words,
$\{N^k\}^{\infty}_{k=1}$ is a basis sequence for the oriented
cobordism graded ring tensored with $\mathbb{Q}$. This means, if we
denote by $\Omega^k$ the restriction of $\Omega^{\text{SO}}_{\ast}$
to the $4k$-dimensional manifolds, then the vector space
$\Omega^k\otimes\mathbb{Q}$ has a basis $\{\prod_{i=1}^tN^{k_i}\}$,
where $(k_1,\ldots,k_t)$ runs over all the partitions of weight $k$
$(=k_1+k_2+\cdots +k_t).$

\item
All $N^k$ and any of their finitely many product
$\prod_{i=1}^tN^{k_i}$ are simply-connected and spin.

\item
$\hat{A}(N^1)=2$ and $\hat{A}(N^k)=0$ with $k\geq 2$.

\item
Suppose we have a sequence of $t$ positive-dimensional Riemannian
manifolds $(M_i,g_i)$ $(1\leq i\leq t)$ such that any Ricci
curvature $\text{Ric}(g_i)$ is either positive or identically zero
and at least one of these $\text{Ric}(g_i)$ is positive. Then the
Ricci curvature $\text{Ric}(\prod_{i=1}^tg_i)$ of the Riemannian
product manifold $(\prod_{i=1}^tM_i,\prod_{i=1}^tg_i)$ is
nonnegative and nonzero at any point.

\item
The product manifold $\prod_{i=1}^tN^{k_i}$ admits a Riemanian
metric whose Ricci curvature is nonnegative and nonzero at any point
if at least one of these $k_i$ is larger than $1$.
\end{enumerate}
\end{lemma}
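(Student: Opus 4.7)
The plan is to verify the five parts in turn, drawing on standard characteristic-class technology plus one crucial analytic input, Yau's solution to the Calabi conjecture, which is needed only for part (5).

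For (1) I would invoke the Milnor--Novikov generator criterion: a sequence $\{M^{4k}\}_{k\geq 1}$ generates $\Omega^{\text{SO}}_{\ast}\otimes\mathbb{Q}$ freely over $\mathbb{Q}$ if and only if the Pontrjagin number $s_k[M^{4k}]$ is nonzero for every $k$, where $s_k$ is the Newton polynomial expressing $\sum x_i^k$ in the elementary symmetric functions of the Pontrjagin roots. For $N^1 = V_2(4)$ only $s_1 = p_1$ has to be checked, and this will fall out of the computation in (3) below. For $N^k = \mathbb{H}P^k$ with $k\geq 2$, the nonvanishing of $s_k$ is classical, arising from the total Pontrjagin class of the tangent bundle of $\mathbb{H}P^k$ expressed via its tautological quaternionic line bundle. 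Part (2) then follows from standard facts: $\mathbb{H}P^k$ is simply connected with $w_2 = 0$; the quartic $V_2(4)$ is simply connected by the Lefschetz hyperplane theorem and has $c_1 = 0$, hence $w_2 \equiv c_1 \bmod 2 = 0$; finite products preserve both conditions via van Kampen and multiplicativity of Stiefel--Whitney classes.

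For (3) I would use the dimension-$4$ identity $\hat{A}_1 = -p_1/24$. A short adjunction computation with $c(V_2(4)) = (1+H)^4/(1+4H)$, where $H$ is restricted from $\mathbb{C}P^3$, gives $c_1 = 0$ and $c_2 = 6H^2$, hence $p_1 = c_1^2 - 2c_2 = -12H^2$; combined with $H^2[V_2(4)] = 4$ this yields $p_1[V_2(4)] = -48$ and $\hat{A}(V_2(4)) = 2$. For $\mathbb{H}P^k$ with $k\geq 2$, which is spin and carries a symmetric metric of positive sectional (hence positive scalar) curvature, the Lichnerowicz vanishing theorem forces $\hat{A}(\mathbb{H}P^k) = 0$. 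Part (4) is immediate from the block-diagonal structure of the Ricci tensor of a Riemannian product, $\text{Ric}(\prod_i g_i) = \bigoplus_i \text{Ric}(g_i)$: if each summand is either positive definite throughout or identically zero and at least one is positive definite, then the direct sum is nonnegative definite and nonzero at every point.

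The heart of the lemma is (5), where Yau's theorem enters. Each $\mathbb{H}P^{k_i}$ with $k_i \geq 2$ carries its natural symmetric metric with positive Ricci curvature, while $N^1 = V_2(4)$, being a simply connected K\"ahler surface with $c_1 = 0$, admits by the Calabi--Yau theorem a Ricci-flat K\"ahler metric. Part (4) then assembles these into a product metric on $\prod_i N^{k_i}$ whose Ricci curvature is nonnegative and nonzero at every point, provided at least one $k_i \geq 2$. The main conceptual step---and the step I expect to be the key difficulty relative to earlier approaches---is precisely this use of Calabi--Yau to equip the $N^1$-factor with a Ricci-flat metric rather than simply excluding it, so that $V_2(4)$-factors may participate in the product without destroying the Ricci positivity contributed by the quaternionic projective factors.
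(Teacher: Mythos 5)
Your proof is correct and follows essentially the same strategy as the paper's: the $s_k$-criterion invoked for part (1) is precisely the Thom procedure the paper references, the Stiefel--Whitney and block-diagonal-Ricci arguments for (2) and (4) agree, and (5) enters the Calabi--Yau theorem in exactly the same way. The only substantive deviation is in (3), where you obtain $\hat{A}(\mathbb{H}P^k)=0$ from the Lichnerowicz vanishing theorem (spin plus positive scalar curvature) instead of the Atiyah--Hirzebruch circle-action vanishing theorem cited in the paper; both are standard and equally valid, and your explicit Chern--Pontrjagin computation for $V_2(4)$ (including the correct sign $\hat{A}_1=-p_1/24$) correctly yields $p_1=-48$ and $\hat{A}=2$.
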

\begin{proof}~
\begin{enumerate}
\item
This was observed in \cite[\S 2.3]{AH}. Indeed, there exists a
simple procedure, which is due to Thom, to detect whether or not a
given sequence of $4k$-dimensional manifolds $(k=1,2,\ldots)$ is a
basis sequence (cf. \cite[p. 79]{Hi} or \cite[p. 43]{HBJ}).

\item
Simply-connectedness is clear. A manifold is spin if and only if its
second Stiefel-Whitney class $\omega_2=0$. $N^k$ $(k\geq 2)$ are
spin as $H^2(N^k;\mathbb{Z}_2)=0$. $N^1$ is spin as $c_1(N^1)=0,$
whose module $2$ reduction is exactly $\omega_2(N^1)$. The fact that
the product $M_1\times M_2$ of two (orientable) spin manifolds $M_1$
and $M_2$ is still spin follows from the Whitney direct sum formula
for
 $\omega_2(M_1\times M_2)$ and the fact that the first
Stiefel-Whitney class is zero if and only if the manifold is
orientable (\cite{MS}).

\item
$\hat{A}(N^k)=0$ for $k\geq 2$ follow form the main result in
\cite{AH} as these $N^k$ admit nontrivial circle actions.
$\hat{A}(N^1)=2$ is quite well-known. In fact there is a closed
formula to calculate the $\hat{A}$-genus of general complete
intersections in the complex projective spaces (\cite{Br}).

\item
This statement follows from the simple fact that the quadric form of
the Ricci curvature $\text{Ric}(g_1\times g_2)$ of the product
metric $g_1\times g_2$ is exactly the direct sum of those of
$\text{Ric}(g_1)$ and $\text{Ric}(g_2)$.

\item
As we have mentioned that on each $N^k$ ($k\geq 2$) one has a
positive Ricci curvature metric. Since $c_1(N^1)=0$, the celebrated
Calabi-Yau theorem, which refers to S.-T. Yau's solution to the
famous Calabi conjecture (\cite{yau}), tells us that $N^1$ carries a
K\"{a}hler (hence Riemannian) metric whose Ricci curvature is
identically zero. Hence this statement follows from $(4)$. Note that
it is \emph{this} place where we apply the remarkable Calabi-Yau
theorem!
\end{enumerate}
\end{proof}

Now we are in a position to prove Theorem \ref{maintheorem}.

\begin{proof}
Fix a positive integer $k$.

Let $\hat{A}_k$ be the restriction of $\hat{A}$ to
$\Omega^k\otimes\mathbb{Q}$. Namely, $\hat{A}_k$ is a linear
homomorphism:
$$\hat{A}_k(\cdot):~\Omega^k\otimes\mathbb{Q}\rightarrow\mathbb{Q}.$$
Let $q$ be a rational linear combination of Pontrjagin numbers on
$4k$-dimensional manifolds. Since Pontrjagin numbers are cobordism
invariants, $q$ can also be viewed as a linear homomorphism
$$q:~\Omega^k\otimes\mathbb{Q}\rightarrow\mathbb{Q}.$$
 In view of these, it suffices to
show that, if $q$ vanishes on all spin manifolds which admits a
metric whose Ricci curvature is nonnegative and nonzero at any
point, then $q$ is a multiple of $\hat{A}_k$.

If $k=1$, the conclusion is clear as there has only one Pontrjagin
number $p_1$ and $\hat{A}_1=\frac{1}{24}p_1$. We now suppose that
$k\geq 2$. By $(1)$ of Lemma \ref{lemma} we know that the vector
space $\Omega^k\otimes\mathbb{Q}$ has a basis
$\{\prod_{i=1}^tN^{k_i}\}$, where $(k_1,\ldots,k_t)$ runs over all
the partitions of weight $k$. $(5)$ of Lemma \ref{lemma} tells us
that $q$ vanishes on all these basis elements with only one possible
exception $(N^1)^k$. The same holds for $\hat{A}_k$ by $(3)$ of
Lemma \ref{lemma}. This implies
$$q-\frac{q\big((N^1)^k\big)}{2^k}\hat{A}_k$$
 vanishes on all these
basis elements and thus
$$q=\frac{q\big((N^1)^k\big)}{2^k}\hat{A}_k~\text{
on}~\Omega^k\otimes\mathbb{Q},$$
 which gives the desired proof.
\end{proof}

\end{document}